\def\BibTeX{{\rm B\kern-.05em{\sc i\kern-.025em b}\kern-.08em
		T\kern-.1667em\lower.7ex\hbox{E}\kern-.125emX}}
\newtheorem{theorem}{Theorem}
\newtheorem{remark}{Remark}
\newtheorem{lemma}{Lemma}
\newtheorem{example}{Example}
\pretocmd\@bibitem{\color{black}\csname keycolor#1\endcsname}{}{\fail}
\newcommand\citecolor[1]{\@namedef{keycolor#1}{\color{black}}}
\begin{document}

\title{Second-Order Non-Convex Optimization for Constrained Fixed-Structure Static Output Feedback Controller Synthesis}

\author{
	Zilong Cheng,
	Jun Ma, 
	Xiaocong Li, 
	Masayoshi Tomizuka, \IEEEmembership{Life Fellow,~IEEE,}
	and Tong Heng Lee	
	\thanks{Z. Cheng and T. H. Lee are with the Integrative Sciences and Engineering Programme, NUS Graduate School, National University of Singapore, Singapore 119077 (e-mail: zilongcheng@u.nus.edu; eleleeth@nus.edu.sg).}
	\thanks{Jun Ma is with the Robotics and Autonomous Systems Thrust, The Hong Kong University of Science and Technology (Guangzhou), Guangzhou, China, and the Department of Electronic and Computer Engineering, The Hong Kong University of Science and Technology, Hong Kong SAR, China (e-mail: jun.ma@ust.hk).}
	\thanks{X. Li is with the John A. Paulson School of Engineering and Applied Sciences, Harvard University, Cambridge, MA 02138 USA (e-mail: xiaocongli@seas.harvard.edu).}
	\thanks{M. Tomizuka is with the Department of Mechanical Engineering, University of California, Berkeley, CA 94720 USA (e-mail: tomizuka@berkeley.edu).}
\thanks{This work has been submitted to the IEEE for possible publication. Copyright may be transferred without notice,	after which this version may no longer be accessible.}}

\maketitle

\begin{abstract}	
	
	For linear time-invariant (LTI) systems, 
	the design of an optimal controller is a commonly encountered problem in many applications. 
	Among all the optimization approaches available, 
	the linear quadratic regulator (LQR) methodology certainly garners much attention and interest.
	As is well-known, 
	standard numerical tools in linear algebra are readily available to determine
	the optimal static LQR feedback gain matrix when all the system state variables are measurable. 
	However, in various certain scenarios where some of the 
	system state variables
	are not measurable, 
	and consequent prescribed structural constraints on the controller structure arise, 
	the optimization problem 
	can become intractable 
	due to the non-convexity
	characteristics that can then be present. 
	In such cases, some first-order methods have been proposed to cater to these problems, 
	but all of these methods, if at all successful,
	are limited to linear convergence. 
	To speed up the convergence, 
	a second-order approach in the matrix space 
	is essential, with appropriate methodology
	to solve the linear equality constrained static output feedback (SOF) problem 
	with a suitably defined linear quadratic cost function.  
	Thus in this work, an efficient method 
	is proposed in the matrix space to calculate the Hessian matrix by solving several Lyapunov equations. 
	Then a new optimization technique is applied to deal with the indefiniteness of the Hessian matrix. 
	Subsequently, through Newton's method with linear equality constraints, 
	a second-order optimization algorithm is developed to 
	effectively solve the constrained SOF LQR problem. 
	Finally, two numerical examples are 
	described which demonstrate the applicability and effectiveness of the proposed method.

\end{abstract}

\begin{IEEEkeywords}
	Constrained optimization, Hessian matrix, linear system control, Newton's method, optimal control, output feedback, second-order method
\end{IEEEkeywords}

\section{Introduction}

Optimal control methodology
has certainly caught notable attention 
both in applications, and also in research and development efforts,
over the many recent past years~\cite{dolk2016output,jiang2019optimal}. 
During this time too, there has been
sustained technological advancement 
which has
facilitated and enabled 
the application of optimal control in 
quite a number of theoretical and practical problems~\cite{zhang2014distributed,wei2015value}.  
Among all the optimization approaches available, 
the linear quadratic regulator (LQR) methodology certainly garners much attention and interest~\cite{wu2018optimal,kanieski2015robust,zhang2015linear}.
In LQR problems, as is well-known, the cost function is defined to be a linear quadratic cost function 
in terms of the state variables and the control inputs; 
and the methodology is effective and straightforwardly applicable
when the dynamic system to be controlled 
can be modeled as linear and time-invariant.  
Here too, it needs to be noted that
the LQR methodology requires the availability of full state feedback as a prerequisite.  
However, in rather many practical applications, 
it can be a typical case that some of the system state variables are not measurable, nor available
for feedback purposes;
and such a situation can happen
arising possibly from 
real-world constraints of feasibility, complexity, and reconfigurability.  

When some of the system state variables are not measurable, 
certainly the alternate approach of
a static output feedback (SOF) controller can be utilized to satisfy 
the prescribed system performance requirements. 
With this approach, 
the optimal control problem can thus 
be formulated as the SOF LQR problem. 
A necessary condition for finding a stable solution 
for the SOF LQR problem are discussed in~\cite{levine1970determination}, 
and an iterative solution is obtained by solving the associated Lyapunov equations. 
Notably there, the controller gain resulting from the Lyapunov equations solution is a full matrix 
without any prescribed structural constraints. 
However, as indicated earlier, structural constraints in the controller gain can arise in certain scenarios; 
such as those, say, in decentralized control and sparse control problems. 
For these problems, it is then not straightforward to derive an optimal solution~\cite{ma2019parameter}. 
The evident reason here is that finding an optimal solution to the SOF problem 
is a Bilinear Matrix Inequality (BMI) optimization problem, 
which is generally non-convex~\cite{sadabadi2016static}. 
Moreover, it has been shown in~\cite{blondel1997np} that the SOF stabilization problem is an NP-hard problem; 
and unless it can be proved $P=NP$, there is no polynomial-time algorithm to solve this problem. 
In the existing literature then, most of the algorithms for finding 
a stable solution to the non-convex SOF problem are based on the Lyapunov equation approach, 
such as the D-K iteration optimization technique~\cite{el1994synthesis,lind1994evaluating}, 
the min-max iteration technique~\cite{geromel1994lmi,geromel1998static}, 
and the projection algorithm~\cite{peres1993h}. 
Also, a cone complementarity linearization algorithm proposed by~\cite{el1997cone} 
interestingly
introduces an efficient technique for finding a stable controller gain matrix 
with certain specifications.

To cater to the situation with structural constraints, 
substantial work actually has been conducted in the core area of gradient projection. 
In~\cite{ma2017integrated}, a first-order gradient projection method 
is implemented to enhance the linear quadratic performance; 
and which also considers the linear equality constraints 
such that the method can be used to solve decentralized control and sparse control problems. 
In~\cite{chanekar2017optimal}, generalized benders decomposition (GBD) 
and gradient projection are combined and utilized 
to solve a constrained linear quadratic problem on the condition 
that the closed-loop system is stable 
and a box constraint on the controller gain matrix is satisfied. 
However, all these existing algorithms utilize essentially the first-order method; 
and thus the convergence rate is limited. 
Here notably although not an unknown matter, 
yet due to the high complexity of calculating 
the Hessian matrix and the indefiniteness of the Hessian matrix, 
the more promising second-order methods are rarely used in 
developing effective solutions to
these non-convex optimal control problems. 
To the best of our knowledge, 
in available known developments,
approaches have been formulated where 
the Hessian matrix can only be calculated 
in terms of the entire controller gain matrix 
instead of separately element-wise~\cite{tassa2012synthesis,lin2013design}. 
Here when the controller gain matrix is sparse, or the dimension of 
the controller gain matrix is much less than the dimension 
of the system state,
the computational complexity of the Hessian matrix is then very high. 

With all of the above descriptions as a back-drop,
in this work here, we thus aim
to develop a second-order optimization approach to solve the SOF LQR problem effectively. 
An efficient method is proposed in the matrix space to calculate the Hessian matrix 
by solving several associated Lyapunov equations. 
Then a new optimization technique is applied to deal with the indefiniteness of the Hessian matrix. 
After that, through the constrained Newton's method, 
a second-order optimization method is developed to solve the 
specified constrained SOF LQR problem. 
To be more specific, in this work, based on a set of feasible solutions (which have been extensively reported in the literature), the sub-optimal points with respect to these feasible solutions can be efficiently and accurately found with our second-order optimization method. Subsequently, the sub-optimal point with the best performance index can be determined.
It is perhaps also worth mentioning and notable that 
the resulting proposed approach here is actually suitably generally applicable 
quite extensively to many various classes of commonly encountered optimal control problems, 
including the controller synthesis problem with prescribed sparsity pattern; the decentralized control problem; 
and certainly even the controller optimization problem without structural constraints.

The paper here is organized thus as follows: 
In Section~\ref{section:preliminary}, the constrained SOF LQR problem is elaborated;
and then the first-order method with gradient projection 
is also reviewed.
In Section~\ref{section:second_order}, 
we present and develop our second-order optimization method
where, firstly,
the Hessian matrix is derived 
with detailed discussions on dealing with the indefiniteness of the Hessian matrix. 
After that, the linear equality constrained Newton's method is given 
to solve the formulated optimization problem. 
In Section~\ref{section:numerical_example}, 
we consider the performance and effectiveness
of our proposed methodology on suitable 
illustrative examples,  
and the results here 
can certainly be seen 
to validate applicability and effectiveness of the proposed method. 
Section~\ref{section:conclusion} then concludes the paper with salient pertinent points.

\section{Preliminaries}\label{section:preliminary}

The following notations are used in the remaining text. $\mathbb R^{m\times n}$ ($\mathbb R^{n}$) denotes the real matrix with $m$ rows and $n$ columns ($n$ dimensional real column vector). $\mathbb S^{n}_{++}$ ($\mathbb S^{n}_{+}$) denotes the $n$ dimensional positive definite (positive semi-definite) real symmetric matrix. The symbol $A\succ0$ ($A\succeq0$) means that the matrix $A$ is positive definite (positive semi-definite). $A^T$ ($x^T$) denotes the transpose of the matrix $A$ (vector $x$). $J^{ij}$ denotes the single-entry matrix with a single entry $1$ located at the $i$th row and $j$th column, and the other entries are zero. $I$ represents the identity matrix with appropriate dimensions. The operator $\text{Tr}(\cdot)$ denotes the trace of a matrix. The operator $\langle \cdot,\cdot \rangle$ denotes the Frobenius inner product, i.e., $\langle A,B\rangle= \text{Tr}\left(A^TB\right)$ for $A,B \in \mathbb R^{m\times n}$. The norm operator based on the inner product operator is defined by $\|x\|_F=\sqrt{\langle x,x\rangle}$ for $x\in \mathbb R^{m\times n}$. The operator $\otimes$ denotes the kronecker product. The operator $\mathrm{vec}(\cdot)$ denotes the vectorization operator that expands a matrix by columns into a column vector. The operator $\text{det}(\cdot)$ denotes the determinant of a square matrix. $[A_1,A_2,\dots,A_n]$ ($[A_1;A_2;\dots;A_n]$) denotes the block matrix organized by rows (columns). $\mathbb E(\cdot)$ means the expectation. The operator $\lambda(\cdot)$ represents the eigenvalues of a matrix, and $\text{Re}(\cdot)$ returns the real part of a complex number. $\operatorname{diag}\{a_1, a_2, \ldots, a_n\}$ represents a diagonal matrix with numbers $a_i$, $\forall i = 1,2, \ldots, n$ as diagonal entries. $(\cdot)^*$ denotes the adjoint operator.

\subsection{Problem Statement}
A stabilizable and detectable linear time-invariant (LTI) system with an SOF controller can be expressed as
\begin{IEEEeqnarray}{rCl}\label{equation:linear_system}
\dot x(t)&=&Ax(t)+Bu(t)\IEEEyesnumber\IEEEyessubnumber\\
z(t)&=&C_1x(t)+D_1u(t)\IEEEyessubnumber\\
y(t)&=&Cx(t)\IEEEyessubnumber\\
u(t)&=&Ky(t),\IEEEyessubnumber
\end{IEEEeqnarray}
where $x(t) \in \mathbb R^n$ is the state vector,  $u(t)\in \mathbb R^{m}$ is the control input vector, $z(t)\in \mathbb R^p$ is the performance output vector used for specifying the system performance, $y(t)\in \mathbb R^q$ is the measured output vector for the controller, $A\in \mathbb R^{n\times n}$ is the state matrix, $B\in \mathbb R^{n\times m}$ is the input matrix, $C_1\in \mathbb R^{p\times n}$ and $D_1 \in \mathbb R^{p\times m}$ are the output matrix and the direct output matrix for specifying the system performance, $C\in \mathbb R^{q\times n}$ is the output matrix for the controller, and $K\in \mathbb R^{m\times q}$ is the SOF controller gain matrix. It is assumed that $C_1^TD_1=0$ and $D_1^TD_1\succ0$.

For an SOF linear quadratic optimization problem with respect to (\ref{equation:linear_system}), the cost function in the infinite horizon is defined as

\begin{IEEEeqnarray}{rCl}
J(K) &=&\color{black} \displaystyle\int_0^\infty z(t)^T z(t)dt\IEEEnonumber\\
&=&\color{black} \displaystyle\int_0^\infty \left[x(t)^TC_1^T  C_1 x(t)+u(t)^TD_1^T D_1 u(t)\right] dt.\IEEEeqnarraynumspace
\end{IEEEeqnarray} 
For simplicity, we define $\color{black} Q=C_1^T C_1$ and $\color{black} R=D_1^T D_1$ as the usual practice. Notably, $Q\in\mathbb S^n$ and $R\in\mathbb S^m$ must be ensured to be positive definite. Then the cost function can be converted to 
\begin{IEEEeqnarray}{rCl}\label{equation:cost_function}
J(K) &=& 
x_0^T\left(\displaystyle\int_0^\infty \Lambda_c(t)^T \left[Q+(KC)^T RKC\right] \Lambda_c(t)dt\right)x_0,\IEEEnonumber\\
\end{IEEEeqnarray} 
where $\Lambda_c(t) = e^{(A+BKC)t}$, and $x_0\in\mathbb R^n$ denotes the initial state vector of the system. The following matrices are used in the remaining text for the sake of brevity,
\begin{IEEEeqnarray}{rClr}
A_c&=&A+BKC\IEEEyesnumber\IEEEyessubnumber\label{equation:A_c}\\
Q_c&=&Q+(KC)^TRKC&\quad\IEEEyessubnumber\label{equation:Q_c}\\
X_0&=&x_0x_0^T\IEEEyessubnumber\\
P&=&\displaystyle\int _0^\infty \Lambda_c^T(t)Q_c\Lambda_c(t)dt.\IEEEyessubnumber
\end{IEEEeqnarray} 
Then the cost function can be expressed by
\begin{IEEEeqnarray}{rClr}
J(K)=\text{Tr} (PX_0).
\end{IEEEeqnarray} 

For the conventional LQR problem, the optimal solution is not affected by the initial condition. However, for the SOF LQR problem, due to the non-convexity of the optimization problem, a direct consequence is the initial condition dependence. In the literature, removing the initial condition dependence for this particular problem has most often been done by assuming that the initial condition is a stochastic uniformly distributed vector over the unit sphere and by minimizing the mathematical expectation of the quadratic cost~\cite{geromel1994decentralized}.

Define the set of the stable controller gains by $\mathscr K_s=\{K\in \mathbb R^{m\times q}\,|\, \max \{\text{Re} (\lambda(A_c))\}<0\}$. Then for each $K\in \mathscr K_s$, there exists a $P\in \mathbb S_{++}^{n}$ such that $A_c^TP+PA_c\prec 0$. Define the Lyapunov operator $L: \mathbb R^{n\times n}\rightarrow \mathbb R^{n\times n}$ given by $P\mapsto A_c^TP+PA_c$, where $A_c$ is defined in \eqref{equation:A_c}. 
\textcolor{black}{To describe 
the important properties of the Lyapunov operator
which is relevant at this point, 
the following important intermediate result, which is a restatement of Theorem 42 in~\cite{vidyasagar2002nonlinear}, is stated and highlighted here in the form of a lemma. }

\begin{lemma}\label{theorem:unique}\cite[Theorem 42]{vidyasagar2002nonlinear}
	For the LTI system~\eqref{equation:linear_system}, there exists a unique solution $P\in \mathbb S_{++}^{n}$ to the Lyapunov equation
	\begin{IEEEeqnarray}{rCl}\label{equation:lyapunov_equation}
		A_c^TP+PA_c+Q_c=0, \nonumber
	\end{IEEEeqnarray} 
	\textcolor{black}{where $A_c$ defined in~\eqref{equation:A_c} is Hurwitz and $Q_c$ defined in~\eqref{equation:Q_c} is positive definite.}
\end{lemma}
If there is no constraint on the LQR problem, and all system state variables
can be measured, then the optimal static state feedback gain can be directly obtained by solving the algebra Riccati equation (ARE). However, in some real-world applications, it is impossible to measure all of the system state variables. Moreover, some constraints on the controller structure must be considered. In these cases, the optimal controller gain matrix to the linear quadratic static state feedback problem cannot be directly obtained. In this work, we assume linear equality constraints are imposed on the controller structure, and we denote the controller parameters satisfying the desired linear equality constraints by $K\in {\mathscr C}$, where ${\mathscr C}=\{K\in {\mathbb R}^{m\times q}\,|\, {\mathcal C}(K)={\mathcal C}_0\}$. Considering the scenarios with multiple linear equality constraints, for all $i=1,2,\ldots,N$, we denote the linear equality constraints on the controller structure by
\begin{IEEEeqnarray}{rCl}\label{eqaution:matrix_constraints}
{\mathcal C}^{(i)}(K)&=&{\mathcal A}_1^{(i)}K{\mathcal B}_1^{(i)}+\dots+{\mathcal A}_{m_i}^{(i)}K{\mathcal B}_{m_i}^{(i)}={\mathcal C}_0^{(i)},
\end{IEEEeqnarray} 
where $\mathcal A_1^{(i)},\dots,\mathcal A_{m_i}^{(i)}$ and $\mathcal B_1^{(i)},\dots,\mathcal B_{m_i}^{(i)}$ are constraint matrices given by the optimization problem, $m_i$ in the subscript represents the number of constraint matrices in one equality for the $i$th equality constraint, and $N$ is the total number of the equality constraints. Then the constrained SOF problem can be summarized as
\begin{IEEEeqnarray}{rl}\label{eqn:optimization_problem}
\underset{K\in\mathbb R^{m\times q}}{\mathrm{minimize}}\quad&  J(K)\IEEEnonumber\\
\operatorname{subject\ to}\quad& \dot x(t)= Ax(t)+Bu(t)\IEEEnonumber\\
& u(t) = KCx(t)\IEEEnonumber\\
& K\in\mathscr C \cap \mathscr K_s.
\end{IEEEeqnarray}

Notably, the determination of an initial stabilizing controller with prescribed structure constraint is rather important, especially in the case when the system matrix is not stable (for instance, the second example in this paper). Actually this problem has been widely explored in the existing literature such as~\cite{crusius1999sufficient}.


\subsection{First-Order Optimization Method}
When the gradient projection method is applied to solve the constrained SOF problem, the problem can be divided into two sub-problems. Firstly, the gradient of the cost function with respect to the controller gain matrix without any constraint is obtained. Secondly, the unconstrained gradient is projected onto the linear equality constraints of the controller structure. By solving the two sub-problems in each iteration, we can obtain the descent direction of the linear quadratic cost function that preserves the linear equality constraints in the controller gain matrix. To solve the first sub-problem, Lemma~\ref{Lemma:gradient} is introduced.

\begin{lemma}\label{Lemma:gradient}
	\textcolor{black}{For the LTI system~\eqref{equation:linear_system} with the cost function~\eqref{equation:cost_function}, given that $Q_c$ defined in (4b) is positive definite and $X_0$ defined in (4c) is positive semi-definite,} the gradient of the cost function with respect to the controller gain matrix is given by
	\begin{IEEEeqnarray*}{c}\label{equation:gradient}
		\dfrac{dJ}{dK}=2\left(B^TP_g +RKC\right)\Gamma C^T,
	\end{IEEEeqnarray*}
	where $P_g\in\mathbb S^n_{++}$ and $\Gamma\in\mathbb S^n_+$ can be obtained by solving the following two Lyapunov equations,
	\begin{IEEEeqnarray*}{c}\label{equation:gradient_}
		LP_g =-Q_c,\quad L^*\Gamma =-X_0.
	\end{IEEEeqnarray*}
	
\end{lemma}

\begin{proof}
	\textcolor{black}{This lemma here above is a relatively straightforward generalization of a stated Lemma 1 in~\cite{geromel1979algorithm}. A full-length proof is available; but because of page limitation constraints, it is omitted here.
}
\end{proof}

After the gradient of the cost function with respect to the controller gain matrix is obtained, we can consider the linear equality constraints for the desired controller structure by using the gradient projection method.

\section{Second-Order Optimization Method}\label{section:second_order}

Essentially, gradient projection method can be used to solve the SOF LQR problem with the linear equality constraints. In most of the optimization problems, this method can work very well except for the slow convergence. One of the reasons is the linear convergence rate for most of the first-order optimization methods. Another reason is that the projection operation causes the loss of the gradient information. To deal with these drawbacks, we propose the following  optimization method.

\subsection{Derivation of the Hessian Matrix}

On the basis of Lemma~\ref{Lemma:gradient}, Theorem~\ref{theorem:hessian_matrix} is introduced to calculate the Hessian matrix of the cost function with respect to the controller gain matrix.

\begin{theorem}~\label{theorem:hessian_matrix}
	For the LTI system~(\ref{equation:linear_system}), the Hessian matrix of the cost function~(\ref{equation:cost_function}) with respect to the controller gain matrix can be expressed element-wisely by
	\begin{IEEEeqnarray}{rCl}\label{equation:hessian}
	\partial_K\partial_{k_{ij}} J &=& 2B^T\left[\left(P_1^{ij}\right)^T+P_1^{ij}\right]\Gamma C^T\IEEEnonumber\\
	&&+2\left[B^TP_g+RKC\right]\left[\left(\Gamma_1^{ij}\right)^T+\Gamma_1^{ij}\right]C^T\IEEEnonumber\\
	&&+2B^T\left[\left(R_1^{ij}\right)^T+R_1^{ij}\right]\Gamma C^T+2RJ^{ij} C \Gamma C ^T,\IEEEeqnarraynumspace
	\end{IEEEeqnarray}
	where $k_{ij}$ denotes the entry in the $i$th row and $j$th column of the gradient matrix, and a set of Lyapunov equations are defined as follows,
	\begin{IEEEeqnarray}{rCl}
	LP_1^{ij}&=&-P_gBJ^{ij}C\IEEEyesnumber\IEEEyessubnumber\label{equation:lypunov_hessian_1}\\
	L^*\Gamma_1^{ij}&=&-\Gamma \left(BJ^{ij}C\right)^T\IEEEyessubnumber\label{equation:lypunov_hessian_2}\\
	LR_1^{ij}&=&-(KC)^TRJ^{ij}C\IEEEyessubnumber\label{equation:lypunov_hessian_3}.
	\end{IEEEeqnarray}
\end{theorem}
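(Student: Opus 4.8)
The plan is to obtain the Hessian by differentiating the gradient formula of Theorem~\ref{theorem:gradient} one further time. Writing $\frac{dJ}{dK}=2\left(B^TP_g+RKC\right)\Gamma C^T$, I would read the Hessian block $\partial_K\partial_{k_{ij}}J$ as the derivative of this entire matrix with respect to the single scalar entry $k_{ij}$ (the two readings of the mixed second derivative agree by symmetry). Since $P_g$, $\Gamma$, and the explicit middle factor $RKC$ all depend on $K$, the product rule together with $\partial_{k_{ij}}K=J^{ij}$ gives three contributions:
\[
\partial_{k_{ij}}\frac{dJ}{dK}=2B^T\left(\partial_{k_{ij}}P_g\right)\Gamma C^T+2\left(B^TP_g+RKC\right)\left(\partial_{k_{ij}}\Gamma\right)C^T+2RJ^{ij}C\Gamma C^T.
\]
The last term already matches the final term of~\eqref{equation:hessian}, so the remaining work is to evaluate $\partial_{k_{ij}}P_g$ and $\partial_{k_{ij}}\Gamma$.

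For $\partial_{k_{ij}}P_g$ I would reuse the implicit-differentiation identity established inside the proof of Theorem~\ref{theorem:gradient}, namely $\partial_{k_{ij}}P_g=-L^{-1}\left(L_{k_{ij}}P_g\right)-L^{-1}\left(\partial_{k_{ij}}Q_c\right)$, and substitute $\partial_{k_{ij}}A_c=BJ^{ij}C$ so that $L_{k_{ij}}P_g=\left(BJ^{ij}C\right)^TP_g+P_g\left(BJ^{ij}C\right)$ and $\partial_{k_{ij}}Q_c=\left(KC\right)^TRJ^{ij}C+\left(\left(KC\right)^TRJ^{ij}C\right)^T$. The key structural fact I would establish first is that $L$ commutes with transposition, $L\left(M^T\right)=\left(LM\right)^T$ (and hence so does $L^{-1}$), which follows immediately from the definition $LP=A_c^TP+PA_c$. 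Together with the symmetry of $P_g$ and $R$, this makes each of the two bracketed quantities a matrix plus its own transpose, so each collapses into an $X+X^T$ form. With $P_1^{ij}$ and $R_1^{ij}$ defined by~\eqref{equation:lypunov_hessian_1} and~\eqref{equation:lypunov_hessian_3}, I obtain $\partial_{k_{ij}}P_g=P_1^{ij}+\left(P_1^{ij}\right)^T+R_1^{ij}+\left(R_1^{ij}\right)^T$.

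The derivative $\partial_{k_{ij}}\Gamma$ is handled in parallel by differentiating its defining equation $L^*\Gamma=-X_0$. Because $X_0=x_0x_0^T$ is independent of $K$, the right-hand side vanishes under $\partial_{k_{ij}}$, and expanding $\partial_{k_{ij}}\left(\Gamma A_c^T+A_c\Gamma\right)=0$ yields $L^*\left(\partial_{k_{ij}}\Gamma\right)=-\Gamma\left(BJ^{ij}C\right)^T-\left(BJ^{ij}C\right)\Gamma$. Invoking the same transposition-commutation property for $L^*$ and the symmetry of $\Gamma$, and using the definition of $\Gamma_1^{ij}$ in~\eqref{equation:lypunov_hessian_2}, I get $\partial_{k_{ij}}\Gamma=\Gamma_1^{ij}+\left(\Gamma_1^{ij}\right)^T$. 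Substituting the two collapsed expressions back into the product-rule formula and grouping the $P_1$, $\Gamma_1$, and $R_1$ terms reproduces~\eqref{equation:hessian} exactly.

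I expect the main obstacle to be the transposition bookkeeping rather than any deep analysis: the whole argument hinges on the observation that $L$ and $L^*$ each commute with matrix transposition, which is precisely what permits four separate $L^{-1}$ (respectively $\left(L^*\right)^{-1}$) solves to be repackaged as the three named matrices $P_1^{ij}$, $R_1^{ij}$, $\Gamma_1^{ij}$ and their transposes. Verifying that both $L_{k_{ij}}P_g$ and the cross term $\partial_{k_{ij}}Q_c$ genuinely split into a matrix plus its own transpose, so that no stray asymmetric remainder survives, is the one place I would be most careful, and it relies essentially on the symmetry of $P_g$, $\Gamma$, and $R$.
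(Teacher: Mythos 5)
Your proposal is correct --- each step checks out (I verified in particular that both $L_{k_{ij}}P_g=(P_gBJ^{ij}C)^T+P_gBJ^{ij}C$ and $\partial_{k_{ij}}Q_c=(KC)^TRJ^{ij}C+\left((KC)^TRJ^{ij}C\right)^T$ genuinely split as $X+X^T$, using the symmetry of $P_g$ and $R$) and the final substitution reproduces \eqref{equation:hessian} exactly --- but it is organized quite differently from the paper's proof. The paper stays at the scalar level: it writes $\partial_{k_{ij}}J$ as a sum of Frobenius inner products, differentiates a second time with respect to a second entry $k_{mn}$, and then uses the adjoint identity $\left(L^{-1}\right)^*=\left(L^*\right)^{-1}$ of Property~\ref{property:adjoint_invert} to shuttle $L^{-1}$ and $\left(L^*\right)^{-1}$ across the inner products so that every Lyapunov solve carries only the $(i,j)$ index; it then expands everything into traces of the form $\text{Tr}\left(MJ^{mn}\right)$ and reassembles the matrix over $(m,n)$. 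You instead differentiate the matrix identity $dJ/dK=2\left(B^TP_g+RKC\right)\Gamma C^T$ once with respect to $k_{ij}$, solve the differentiated Lyapunov equations for $\partial_{k_{ij}}P_g$ and $\partial_{k_{ij}}\Gamma$ outright, and collapse the results via your transposition-equivariance observation $L\left(M^T\right)=\left(LM\right)^T$ and $L^*\left(M^T\right)=\left(L^*M\right)^T$ --- a lemma the paper never states, its role being played there by the adjoint-pairing step. Your route is shorter, avoids the trace bookkeeping entirely, makes transparent why the three bracketed $X+X^T$ terms appear, and exhibits the Hessian row as a forward sensitivity of the gradient (an adjoint-free view, in contrast to the paper's). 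Its one extra ingredient is the appeal to equality of mixed partials so that $\partial_{k_{ij}}\left(\partial_{k_{mn}}J\right)$ may stand in for $\partial_{k_{mn}}\partial_{k_{ij}}J$; this needs $J$ to be twice continuously differentiable on $\mathscr K_s$, which holds since $P_g=-L^{-1}Q_c$ depends analytically on $K$ there, but is worth stating explicitly --- the paper's double-scalar-derivative computation sidesteps Schwarz at the cost of the heavier trace manipulation. Both derivations ultimately rest on the same implicit differentiation of $LP_g=-Q_c$ and $L^*\Gamma=-X_0$ and arrive at the same three Lyapunov equations \eqref{equation:lypunov_hessian_1}--\eqref{equation:lypunov_hessian_3}.
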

\begin{proof}
	By Lemma~\ref{Lemma:gradient}, denote the gradient of the cost function in terms of the single element of the controller gain matrix in the inner product form, and then we have
	\begin{IEEEeqnarray}{rCl}
	\partial_{k_{ij}} J&=&2\left\langle{\Gamma ,P_gBJ^{ij}C}\right\rangle
	+2\left\langle{ \Gamma,(KC)^TR J^{ij}C}\right\rangle.
	\end{IEEEeqnarray}
	Then the Hessian matrix of the cost function can be expressed in the scalar form,
	\begin{IEEEeqnarray}{rCl}
	\partial_{k_{mn}}\partial_{k_{ij}} J
	&=&2\left\langle{\partial_{k_{mn}}\Gamma,P_gBJ^{ij}C}\right\rangle
	+2\left\langle{\Gamma ,\partial_{k_{mn}}P_gBJ^{ij}C}\right\rangle\IEEEnonumber\\
	&&+2\left\langle{\partial_{k_{mn}}\Gamma,(KC)^TR J^{ij}C}\right\rangle\IEEEnonumber\\
	&&+2\left\langle{\Gamma,\left(J^{mn}C\right)^TR J^{ij}C}\right\rangle.
	\end{IEEEeqnarray}
	Note that $\Gamma = -\left(L^*\right)^{-1}(X_0)$. Define $L_k^*\Gamma=\Gamma \left(\partial_kA_c^T\right)+\left(\partial_k A_c\right)\Gamma$,
	and then we have
	\begin{IEEEeqnarray}{rCl}
	\partial_{k_{mn}}\partial_{k_{ij}} J
	&=&2\left\langle{L^*_{k_{mn}}\Gamma,L^{-1}\left(-P_gBJ^{ij}C\right)}\right\rangle\IEEEnonumber\\
	&&+2\left\langle{\Gamma ,\left(-L^{-1}L_{k_{mn}}P_g-L^{-1}\partial_{k_{mn}}Q_c\right)BJ^{ij}C}\right\rangle\IEEEnonumber\\
	&&+2\left\langle{L^*_{k_{mn}}\Gamma,L^{-1}\left(-(KC)^TR J^{ij}C\right)}\right\rangle\IEEEnonumber\\
	&&+2\left\langle{\Gamma,\left(J^{mn}C\right)^TR J^{ij}C}\right\rangle.
	\end{IEEEeqnarray}
	Since we have $\left\langle{\Gamma ,\left(-L^{-1}L_{k_{mn}}P_g-L^{-1}\partial_{k_{mn}}Q_c\right)BJ^{ij}C}\right\rangle = \left\langle{\Gamma\left(BJ^{ij}C\right)^T ,-L^{-1}L_{k_{mn}}P_g-L^{-1}\partial_{k_{mn}}Q_c}\right\rangle$, the Hessian matrix is given by
	\begin{IEEEeqnarray}{RL}
	&\partial_{k_{mn}}\partial_{k_{ij}} J\IEEEnonumber\\
	=&2\left\langle{L^*_{k_{mn}}\Gamma,L^{-1}\left(-P_gBJ^{ij}C\right)}\right\rangle\IEEEnonumber\\
	&+2\left\langle{\left(L^*\right)^{-1}\left(-\Gamma\left(BJ^{ij}C\right)^T\right),L_{k_{mn}}P_g+\partial_{k_{mn}}Q_c}\right\rangle\IEEEnonumber\\
	&+2\left\langle{L^*_{k_{mn}}\Gamma,L^{-1}\left(-(KC)^TR J^{ij}C\right)}\right\rangle\IEEEnonumber\\
	&+2\left\langle{\Gamma,\left(J^{mn}C\right)^TR J^{ij}C}\right\rangle.
	\end{IEEEeqnarray}
	From $L_{k_{mn}}^*\Gamma=\Gamma \left(BJ^{mn}C\right)^T+\left(BJ^{mn}C\right)\Gamma$
	and \eqref{equation:lypunov_hessian_1}$-$\eqref{equation:lypunov_hessian_3}, it follows that
	\begin{IEEEeqnarray}{rCl}
	\partial_{k_{mn}}\partial_{k_{ij}} J
	&=&2\left\langle{\Gamma\left(BJ^{mn}C\right)^T+\left(BJ^{mn}C\right)\Gamma,P_1^{ij}}\right\rangle\IEEEnonumber\\
	&&+2\Big\langle\Gamma_1^{ij},\left(BJ^{mn}C\right)^TP_g+P_g\left(BJ^{mn}C\right)\IEEEnonumber\\
	&&\quad+\left(J^{mn}C\right)^TRKC+\left(KC\right)^TRJ^{mn}C\Big\rangle\IEEEnonumber\\
	&&+2\left\langle{\Gamma\left(BJ^{mn}C\right)^T+\left(BJ^{mn}C\right)\Gamma,R_1^{ij}}\right\rangle\IEEEnonumber\\
	&&+2\left\langle{\Gamma,\left(J^{mn}C\right)^TR J^{ij}C}\right\rangle.
	\end{IEEEeqnarray}
	Then the Hessian matrix in the trace form is expressed as
	\begin{IEEEeqnarray}{l}
	\quad\partial_{k_{mn}}\partial_{k_{ij}} J\IEEEnonumber\\
	=2\text{Tr}\left(C\Gamma P_1^{ij}BJ^{mn}\right)+2\text{Tr}\left(B^T P_1^{ij}\Gamma C^T\left(J^{mn}\right)^T\right)\IEEEnonumber\\
	+2\text{Tr}\left(C\Gamma_1^{ij}P_gBJ^{mn}\right)+2\text{Tr}\left(B^TP_g\Gamma_1^{ij} C^T\left(J^{mn}\right)^T\right)\IEEEnonumber\\
	+2\text{Tr}\left(C\Gamma_1^{ij}\left(KC\right)^TRJ^{mn}\right)+2\text{Tr}\left(RKC\Gamma_1^{ij}C^T\left(J^{mn}\right)^T\right) \IEEEnonumber\\
	+2\text{Tr}\left(C\Gamma R_1^{ij}BJ^{mn}\right)+2\text{Tr}\left(B^TR_1^{ij}\Gamma C^T\left(J^{mn}\right)^T\right)  \IEEEnonumber\\
	+2\text{Tr}\left(R J^{ij}C\Gamma C^T\left(J^{mn}\right)^T\right).
	\end{IEEEeqnarray}
	By the continuity and linearity of the trace operator, the Hessian matrix can be expressed as (\ref{equation:hessian}).
	This completes the proof of Theorem~\ref{theorem:hessian_matrix}.
\end{proof}

\subsection{Indefiniteness of the Hessian Matrix}

The indefiniteness of the Hessian matrix is a pervasive problem existing in the non-convex optimization problems. The algorithms on the second-order optimization for the nonlinear optimization problems have been widely studied.
Intuitively, finding a locally optimal point for the non-convex problem should be as simple as finding a globally optimal point for the non-convex problem, but in practice, the fact is that many more steps are required to achieve the locally optimal point. This is because of the pervasively existing saddle points in the non-convex problems. It has been shown that for the non-convex optimization problems, it is the saddle points that impede the optimization procedures~\cite{dauphin2014identifying}. Therefore, how to evade the saddle points becomes a critical problem. 

An intuitive solution to evade the saddle point is to rescale the gradient vector by the inverse of the absolute value of the corresponding eigenvalue, i.e., rescale $\left({dJ}/{dK}\right)_i$ by ${1}/{|\lambda_i|}$, where $\lambda_i$ is the $i$th eigenvalue of the Hessian matrix\cite{dauphin2014identifying}. Adding an identity matrix to the indefinite Hessian matrix such that the matrix $(\alpha I+H)$ is positive definite~\cite{tassa2012synthesis} and using the absolute value of the Hessian matrix~\cite{nocedal2006numerical} are also commonly used in the existing literature. However, there is no theoretical support for such techniques so far and even no intuitive explanation. 
Even though many algorithms have been proposed, how to evade the saddle point when the second-order methods are used for the non-convex optimization problems is still an open question. In this paper, the positive definite truncated (PT)-inverse method proposed by~\cite{paternain2019newton} is utilized.
Since the PT-inverse can guarantee that the Hessian matrix is positive definite, the iteration steps are in the proper descent direction. The sub-optimal point can be definitely achieved alongside this direction.  

\subsection{Equality Constrained Newton's Method}

Since the controller gain matrix $K\in \mathbb R^{m\times q}$ is not in a vector form, the Hessian matrix of the cost function cannot be denoted explicitly. By expanding the controller gain matrix into the vector form, we can do the optimization in terms of the vector form controller gain. After that, the controller gain can be easily converted to the matrix form for further implementation.

Theorem~\ref{theorem:vector_constraints} shows that the linear equality constraints can be expressed explicitly in the vector form.
\begin{theorem}\label{theorem:vector_constraints}
	The linear equality constraints defined in \eqref{eqaution:matrix_constraints} can be converted to the vector form, which can be expressed as
	\begin{IEEEeqnarray}{rCl}
	\bar{\mathcal{A}} \mathrm{vec}(K) = \bar{\mathcal{C}},
	\end{IEEEeqnarray}
	where
	\begin{IEEEeqnarray}{rCl}\label{equation:transformation}
	\bar{\mathcal{A}} &=& 
	\Bigg[
	\sum_{i=1}^{m_1}\left(\left({\mathcal B}_i^{(1)}\right)^T\otimes{\mathcal A}_i^{(1)}\right);
	\sum_{i=1}^{m_2}\left(\left({\mathcal B}_i^{(2)}\right)^T\otimes{\mathcal A}_i^{(2)}\right);\dots;\IEEEnonumber\\
	&&\sum_{i=1}^{m_N}\left(\left({\mathcal B}_i^{(N)}\right)^T\otimes{\mathcal A}_i^{(N)}\right)\Bigg]\IEEEyesnumber\IEEEyessubnumber\\
	\bar{\mathcal{C}} &=& \left[\mathrm{vec} \left({\mathcal C}_0^{(1)}\right); \mathrm{vec} \left({\mathcal C}_0^{(2)}\right);\dots;\mathrm{vec} \left({\mathcal C}_0^{(N)}\right)\right].\IEEEyessubnumber
	\end{IEEEeqnarray}
\end{theorem}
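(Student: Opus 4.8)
The plan is to apply the standard vectorization identity $\mathrm{vec}(AXB) = (B^T \otimes A)\,\mathrm{vec}(X)$ to each of the $N$ matrix equality constraints individually and then stack the resulting vector equations into a single block system. First I would vectorize both sides of the $i$th constraint in \eqref{eqaution:matrix_constraints}, obtaining $\mathrm{vec}({\mathcal C}_i(K)) = \mathrm{vec}({\mathcal C}_0^{(i)})$. Since $\mathrm{vec}(\cdot)$ is linear, it distributes over the finite sum $\sum_{j=1}^{m_i} {\mathcal A}_j^{(i)} K {\mathcal B}_j^{(i)}$, yielding $\sum_{j=1}^{m_i} \mathrm{vec}\!\left({\mathcal A}_j^{(i)} K {\mathcal B}_j^{(i)}\right)$ on the left-hand side.

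Next I would apply the Kronecker identity term by term, so that each summand becomes $\left(({\mathcal B}_j^{(i)})^T \otimes {\mathcal A}_j^{(i)}\right)\mathrm{vec}(K)$. Factoring the common vector $\mathrm{vec}(K)$ out of the sum, again by linearity, collapses the $i$th constraint to the single vector equation $\left(\sum_{j=1}^{m_i}({\mathcal B}_j^{(i)})^T \otimes {\mathcal A}_j^{(i)}\right)\mathrm{vec}(K) = \mathrm{vec}({\mathcal C}_0^{(i)})$. The coefficient matrix appearing here is precisely the $i$th row block of $\bar{\mathcal A}$ defined in \eqref{equation:transformation}, and the right-hand side is precisely the $i$th block of $\bar{\mathcal C}$. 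Stacking these $N$ equations vertically then reproduces the block definitions of $\bar{\mathcal A}$ and $\bar{\mathcal C}$ verbatim, giving $\bar{\mathcal A}\,\mathrm{vec}(K) = \bar{\mathcal C}$ and completing the argument.

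The computation is essentially routine; the only ingredient doing real work is the vec--Kronecker identity, which I would either invoke as a standard fact or verify in one line by comparing columns. The part demanding the most care is bookkeeping rather than mathematics: I must check that the vertical concatenation of the $N$ per-constraint coefficient blocks is dimensionally consistent. Each block $({\mathcal B}_j^{(i)})^T \otimes {\mathcal A}_j^{(i)}$ has $mq$ columns, matching the length of $\mathrm{vec}(K)$, so the stacked matrix $\bar{\mathcal A}$ is well-defined, and the row count of each block agrees with the length of the corresponding $\mathrm{vec}({\mathcal C}_0^{(i)})$ block in $\bar{\mathcal C}$. With these dimensions verified, no obstacle of substance remains, and the proof follows directly from the linearity of $\mathrm{vec}(\cdot)$ together with the single Kronecker identity.
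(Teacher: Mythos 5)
Your proof is correct and follows essentially the same route as the paper's: both vectorize each constraint, apply the identity $\mathrm{vec}(AXB) = \left(B^T \otimes A\right)\mathrm{vec}(X)$ term by term using linearity of $\mathrm{vec}(\cdot)$, and stack the $N$ resulting vector equations into the block system $\bar{\mathcal A}\,\mathrm{vec}(K) = \bar{\mathcal C}$. Your version merely spells out the linearity and dimension-consistency bookkeeping that the paper's terser proof leaves implicit.
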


\begin{proof}
	By doing the vectorization in both sides to the constraints expressed in the matrix form as shown in (\ref{eqaution:matrix_constraints}), we can derive
	\begin{IEEEeqnarray}{rCl}
	\Bigg[\sum_{i=1}^{m_j}\left(\left({\mathcal B}_i^{(j)}\right)^T\otimes{\mathcal A}_i^{(j)}\right)\Bigg]\mathrm{vec}(K) = \mathrm{vec}\left({\mathcal C}_0^{(j)}\right),
	\end{IEEEeqnarray}
	where $j$ denotes the $j$th linear equality constraint. Then Theorem~\ref{theorem:vector_constraints} can be easily proved if all the equations are denoted in a block matrix form.
\end{proof}


For the linear equality constrained Newton's method, we need to ensure that the point after each iteration must stay in the feasible region, i.e., $\Bar{\mathcal{A}} \mathrm{vec} (K+\Delta K) = \Bar{\mathcal{C}}$. Therefore, if the stability constraint condition is ignored temporarily, we have the following optimization problem at a specific point $K=K_s$,
\begin{IEEEeqnarray}{L}\label{equation:Newton_optimization_problem}
\underset{\mathrm{vec}(\Delta K)\in\mathbb R^{mq}}{\mathrm{minimize}} \quad \bar J(\mathrm{vec}(K_s+\Delta K))\IEEEnonumber\\ 
\quad\quad\quad =J(K_s)+G_v^T\mathrm{vec}(\Delta K)+\frac{1}{2}\mathrm{vec}(\Delta K)^T H_v\mathrm{vec}(\Delta K) \IEEEnonumber\\
\;\;\text{subject to}  \quad\Bar{\mathcal{A}} \left(\mathrm{vec} (K_s)+\mathrm{vec}(\Delta K)\right) = \Bar{\mathcal{C}},
\end{IEEEeqnarray}

By using the analytical solution to the linear quadratic optimization problem, we can denote (\ref{equation:Newton_optimization_problem}) in the matrix form,
\begin{IEEEeqnarray}{LL}\label{equation:Newton_step}
\begin{bmatrix}
H_v & \bar {\mathcal{A}}^T\\
\bar {\mathcal {A}} & 0
\end{bmatrix}
\begin{bmatrix}
\mathrm{vec}(\Delta K)\\ w
\end{bmatrix}=
\begin{bmatrix}
-G_v\\0
\end{bmatrix},
\end{IEEEeqnarray}
where $w$ is the dual variable vector with the appropriate dimension for the linear quadratic optimization problem, $G_v\in\mathbb R^{mq}$ and $H_v\in \mathbb R^{mq\times mq}$ are given as
\begin{IEEEeqnarray}{rCl}
G_v&=&\mathrm{vec}\left(\frac{dJ}{dK}\right)\IEEEyesnumber\IEEEyessubnumber~\label{equation:gradient_vector}\\
H_v&=&\bigg[
\mathrm{vec}\left(\frac{\partial^2 J}{\partial k_{11}\partial K}\right),
\dots,
\mathrm{vec}\left(\frac{\partial^2 J}{\partial k_{m1}\partial K}\right),\IEEEnonumber\\
&&\quad\mathrm{vec}\left(\frac{\partial^2 J}{\partial k_{1q}\partial K}\right),
\dots,
\mathrm{vec}\left(\frac{\partial^2 J}{\partial k_{mq}\partial K}\right)
\bigg].\IEEEyessubnumber~\label{equation:hessian_vector}
\end{IEEEeqnarray}
Then in each iteration, we can derive the Newton step $\mathrm{vec}(\Delta K)$ by solving (\ref{equation:Newton_step}).

However, since this problem is non-convex, the indefiniteness of the Hessian matrix must be considered. Integrated with the PT-inverse method, the Newton step $\mathrm{vec}(\Delta K)$ is given by solving the following matrix equation,
\begin{IEEEeqnarray}{LL}\label{equation:Newton_step_PT}
\begin{bmatrix}
H_{v,\epsilon} & {\bar {\mathcal{A}}}^T\\
\bar {\mathcal {A}} & 0
\end{bmatrix}
\begin{bmatrix}
\mathrm{vec}(\Delta K)\\ w
\end{bmatrix}=
\begin{bmatrix}
-G_v\\0
\end{bmatrix},
\end{IEEEeqnarray}
where $H_{v,\epsilon}$ is the PT-matrix for the Hessian matrix $H_v$. To calculate the PT-matrix,
we use the singular value decomposition (SVD). Denote $H_v=M\Lambda M^T$, where $M\in\mathbb R^{n\times n}$ is a unitary matrix, and $\Lambda \in \mathbb S^n$ is a diagonal matrix. Define the positive definite truncated eigenvalue matrix $\Lambda_\epsilon$ with the parameter $\epsilon$ as
\begin{IEEEeqnarray}{c}
(\Lambda_\epsilon)_{ii}=
\begin{cases}
|\Lambda_{ii}|& \text{if } |\Lambda_{ii}|\ge \epsilon \\
\epsilon & \text{otherwise}.
\end{cases}
\end{IEEEeqnarray}
The PT-matrix of the Hessian matrix $H_v$ with the parameter $\epsilon$, which is denoted by $H_{v,\epsilon}$, is given by $H_{v,\epsilon} = M\Lambda_\epsilon M^T$.

From~\cite{nocedal2006numerical}, we can guarantee that each step $\mathrm{vec}(\Delta K)$ is a descent step. Since the cost function value of an unstable system is infinite, the stability of the system can be guaranteed if the cost function value belongs to a decreasing sequence as long as the initial gain stabilizes the closed-loop system.

Algorithm~\ref{algorithm: back_tracking_line_search} is introduced to summarize the modified backtracking line search used in this paper.
Then the linear equality constrained second-order non-convex optimization algorithm is summarized in Algorithm~\ref{algorithm:optimization}.
\begin{remark}
	$P_g$ must be positive definite to ensure the stability of the system. This can be easily seen from the Lyapunov stability theorem of the linear system. Therefore, for each step, the positive definiteness of the $P_g$ matrix must be guaranteed in the backtracking line search algorithm. \textcolor{black}{The convergence of the backtracking line search algorithm is guaranteed with the assumption of the existence of a stable solution~\cite{dong2013robust}.}
\end{remark}


\begin{algorithm}
	\caption{Backtracking line search with guaranteed stability }
	\begin{algorithmic}[1]\label{algorithm: back_tracking_line_search}
		\renewcommand{\algorithmicrequire}{\textbf{Input:}}
		\renewcommand{\algorithmicensure}{\textbf{Output:}}
		\REQUIRE Current controller gain matrix $K$, descent direction $\Delta K$, gradient $dJ/dK$, and backtracking parameters $\alpha \in (0,0.5)$, $\beta\in(0,1)$
		\ENSURE Controller gain matrix after iteration $K'$
		\\ \textit{Initialization} $t=1$:
		\WHILE {\TRUE}
		\STATE Compute $P_g\in\mathbb R^{n\times n}$ for $J(K+t\Delta K)$ \\
		\IF {$J(K+t\Delta K) < J(K)+\alpha t \text{Tr}\left(\left(dJ/dK\right)^T\Delta K\right)$ \AND $\min \{\text{eig}(P_g)\} > 0$}
		\STATE \textbf{break}
		\ELSE
		\STATE $t=\beta t$
		\ENDIF
		\ENDWHILE
		\RETURN $K'=K+t\Delta K$
	\end{algorithmic} 
\end{algorithm}

\begin{algorithm}
	\caption{Second-order optimization algorithm for the SOF LQR problem}
	\begin{algorithmic}[1]\label{algorithm:optimization}
		\renewcommand{\algorithmicrequire}{\textbf{Input:}}
		\renewcommand{\algorithmicensure}{\textbf{Output:}}
		\REQUIRE Stable controller gain matrix $K$, tolerance $\varepsilon>0$
		\ENSURE Sub-optimal controller gain matrix $K^*$
		\WHILE {\TRUE}
		\STATE Compute the gradient vector of the cost function $G_v$ by \eqref{equation:gradient} and (\ref{equation:gradient_vector})
		\STATE Compute the Hessian matrix of the cost function $H_v$ by \eqref{equation:hessian} and (\ref{equation:hessian_vector})
		\STATE Compute the PT-matrix $H_{v,\epsilon}$ of the Hessian matrix $H_v$
		\STATE Compute the Newton step $\mathrm{vec}(\Delta K)$ by (\ref{equation:Newton_step_PT})
		\IF {$\|\mathrm{vec}(\Delta K)\|\le \varepsilon$}
		\STATE \textbf{break}
		\ENDIF
		\STATE  Conduct the line search using Algorithm~\ref{algorithm: back_tracking_line_search} to find the controller gain matrix $K'$ for the next iteration
		\ENDWHILE
		\RETURN $K^*=K$
	\end{algorithmic} 
\end{algorithm}

In terms of the complexity for deriving the gradient, the Lyapunov equation can be solved with the Bartels-Stewart algorithm~\cite{bartels1972solution}, which has the complexity $\mathcal O(n^3)$. Because $C$ and $R$ are usually sparse matrices, the complexity of the matrix multiplication here is $\mathcal O(mn^2)$. Therefore, the complexity for deriving the gradient of the cost function is $\mathcal O(n^3+mn^2)$. The complexity for deriving the Hessian matrix is the same. However, finding the Newton direction involves the inverse operation of a matrix with the dimension $mq+k$, where $k$ denotes the row dimension of the equality constraint parameter matrix $\bar {\mathcal A}$. With the Coppersmith-Winograd algorithm~\cite{coppersmith1982asymptotic}, the complexity for deriving the Newton direction is  $\mathcal O((mq+k)^{2.373})$. The line search method is with the complexity $\mathcal O(n^3)$. Thus the complexity of the proposed algorithm is given by $\mathcal O((mq+k)^{2.373} + n^3+mn^2)$.
	
\section{Numerical Examples}\label{section:numerical_example}
In this section, two appropriate examples are worked through
to demonstrate the effectiveness and applicability 
of the proposed second-order optimization method here. 
The first example is a benchmark problem presented in~\cite{choi1974computation}, where the numerical data in this example refers to a Mach 2.7 flight condition of a supersonic transport aircraft. It aims to design an SOF controller for a given fourth-order system without any constraints. 
The second example focuses on the design of a linear equality constrained SOF controller 
for a third-order decentralized system. 
Both the first-order optimization algorithm with the gradient projection method and 
the proposed second-order optimization algorithm here are applied to solve the SOF problem. 
Comparative results are given to demonstrate the performance of both methods.  
Both of the SOF problems in the given examples are solved on 
a computer with 16G RAM and a 2.2GHz i7-8750H processor (6 cores), 
and the optimization algorithm is implemented and executed on MATLAB R2019b
(essentially a rather commonly available engineering development/computation environment presently).
\begin{example}
	The fourth-order system for an aircraft system is given by
	\begin{IEEEeqnarray}{rCl}
	\dot x(t) = Ax(t)+Bu(t) \quad y(t)= Cx(t)\quad u(t)=Ky(t),\IEEEeqnarraynumspace 
	\end{IEEEeqnarray}
	where
	\begin{IEEEeqnarray}{l}
	A=\left[
	\begin{array}{llll}
	-0.03700 & \phantom+0.01230 & \phantom+0.00055 & -1.00000\\
	\phantom+ 0.00000 & \phantom+0.00000 & \phantom+1.00000 & \phantom+0.00000\\
	-6.37000 & \phantom+0.00000 & -0.23000 & \phantom+0.06180\\
	\phantom+ 1.25000 & \phantom+0.00000 & \phantom+0.01600 & -0.04570
	\end{array}
	\right]\IEEEnonumber\\
	B=\left[
	\begin{array}{ll}
	\phantom+0.000840 & \phantom+0.000236\\
	\phantom+0.000000 & \phantom+0.000000\\
	\phantom+0.080000 & \phantom+0.804000\\
	-0.086200 & -0.066500
	\end{array}
	\right]
	C=\left[
	\begin{array}{llll}
	0 & 1 & 0 & 0\\
	0 & 0 & 1 & 0\\
	0 & 0 & 0 & 1
	\end{array}\right].\nonumber\\
	\end{IEEEeqnarray}
	
	An optimal controller $K$ is designed to minimize the cost function as given by
	\begin{IEEEeqnarray}{rCl}
	J &=&\displaystyle\int_0^\infty \left(x(t)^TQ x(t)+u(t)^TR u(t)\right) dt,
	\end{IEEEeqnarray}
	where the weighting parameters are chosen as $Q=I,\; R=I$ for demonstrative purposes. The system initial state matrix is chosen as a random vector with $\mathbb E\left(x_0x_0^T\right)=I$. 
\end{example}

The initial controller gain matrix is chosen as a zero matrix, with which the closed-loop system is stable. The stopping criterion is chosen as $\varepsilon = 10^{-9}$. For both of the first-order optimization method and the second-order optimization method, Algorithm~\ref{algorithm: back_tracking_line_search} is used to choose the suitable step size. The parameters for the backtracking line search are chosen as $\alpha = 0.2$ and $\beta = 0.1$. The parameter for the PT-matrix, which will be used in the second-order optimization method, is chosen as $\epsilon =  10^{-9}$.

\begin{figure}
	\centering
	\includegraphics[trim=100 50 120 50,width=1\columnwidth]{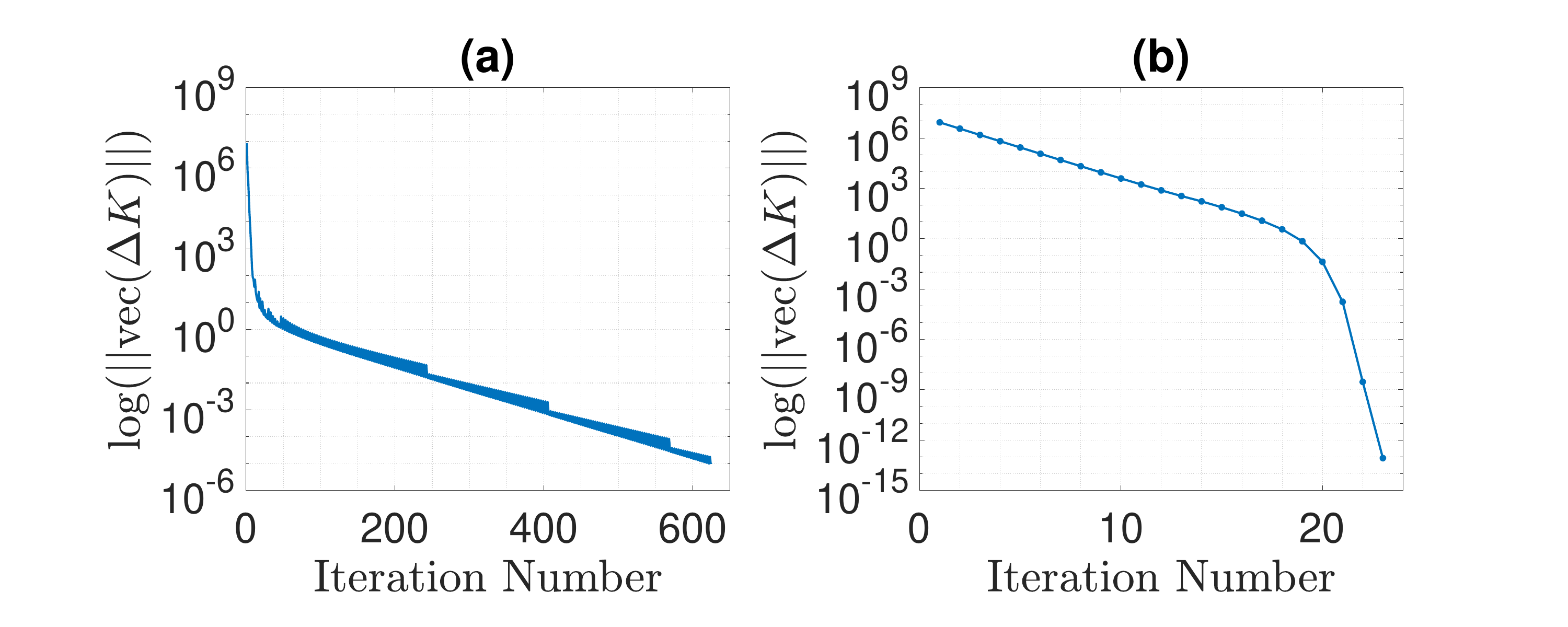}
	\caption{Norm of the gradient in Example 1. (a) First-order method. (b) Proposed method.}
	\label{fig:cost1}
\end{figure}

Fig.~\ref{fig:cost1}(a) shows the norm of the gradient of the cost function with respect to the controller gain matrix with the first-order optimization method. It can be seen that the norm has a decreasing trend after iterations with the first-order optimization method. Since it takes too many iterations to satisfy the stopping criterion, and the tendency for the curve of the norm of the gradient is much more clear with less data point, a relaxed stopping criterion $\varepsilon = 1\times 10^{-5}$ is chosen for the first-order method. It takes 624 iterations to achieve the sub-optimal point with the norm of the gradient $\|\mathrm{vec}(\Delta K^*)\|=9.5772\times 10^{-6}$. If the number of backtracking line search iterations is also taken into consideration, it takes in total 1696 iterations to reach the sub-optimal point with the defined stopping criterion. It can be seen that except for the very beginning iterations, the convergence rate is linear in most of the iterations.

The sub-optimal point with the first-order optimization method in this example is $J(K^*)=159.0686$. It takes 154.4332 seconds to reach this sub-optimal point. The sub-optimal parameter matrix given by the first-order method is
\begin{IEEEeqnarray}{l}
K^*_{(1)}=\left[
\begin{array}{llll}
\phantom+0.3975 & \phantom+1.5925 & \phantom+7.8522 \\
-1.2575 & -3.4823 & -5.0040 
\end{array}
\right].
\end{IEEEeqnarray}


Fig.~\ref{fig:cost1}(b) shows the norm of the gradient of the cost function with respect to the controller gain matrix with the proposed second-order optimization method. It can be seen that the norm decreases after each iteration with the second-order optimization method. Compared with the first-order optimization method, the second-order optimization method 
shows significantly higher convergence. 
It only takes 23 iterations to achieve the sub-optimal point with the norm of the gradient $\|\mathrm{vec}(\Delta K^*)\|=1.7571\times 10^{-13}$. If we consider the number of backtracking line search, it totally takes 24 iterations to reach the sub-optimal point with this norm. Therefore, in this example, the backtracking line search can reach a satisfying point almost in each iteration, which means the second-order optimization method can save much computational effort.


The sub-optimal point with the second-order optimization method in this example is $J(K^*)=159.0686$. It only takes 3.0150 seconds to reach this sub-optimal point. We can see that when the parameters approach closely to the sub-optimal point, this method can achieve second-order convergence, which means that the parameters can converge much faster than the first-order method. The sub-optimal parameter matrix given by the second-order method is
\begin{IEEEeqnarray}{l}
K^*_{(2)}=\left[
\begin{array}{llll}
\phantom+0.3975 & \phantom+1.5925 & \phantom+7.8522 \\
-1.2575 & -3.4823 & -5.0041
\end{array}
\right].
\end{IEEEeqnarray}

\begin{example}
	A third-order system is considered with the following structure,
	\begin{IEEEeqnarray}{rCl}
	\dot x(t) = Ax(t)+Bu(t) \quad y(t)= Cx(t)\quad u(t)=Ky(t),\IEEEeqnarraynumspace 
	\end{IEEEeqnarray}
	where
	\begin{IEEEeqnarray}{l}
	A=\left[
	\begin{array}{lll}
	-4 & \phantom+2 & \phantom+1\\
	\phantom+3 & -2 & \phantom+5\\
	-7 & \phantom+0 & \phantom+3 
	\end{array}
	\right]\quad
	B=\left[
	\begin{array}{ll}
	1 & 0\\
	1 & 0\\
	0 & 1
	\end{array}
	\right]\IEEEnonumber\\
	C=\left[
	\begin{array}{llll}
	0 & 1 & 0\\
	0 & 0 & 1
	\end{array}
	\right].
	\end{IEEEeqnarray}
	
	A decentralized optimal controller $K=\operatorname{diag}\{k_{11},k_{22}\}$ is designed to minimize the cost function given by
	\begin{IEEEeqnarray}{rCl}
	J &=&\displaystyle\int_0^\infty \left(x(t)^TQ x(t)+u(t)^TR u(t)\right) dt,
	\end{IEEEeqnarray}
	where the weighting parameters are chosen as $Q=I,\; R=I$ for demonstrative purposes.
\end{example}

The decentralized linear equality constraints are denoted as
\begin{IEEEeqnarray}{C}
\mathcal A_1^{(1)} K \mathcal B_1^{(1)} = \mathcal C_1^{(1)}, \quad \mathcal A_1^{(2)} K \mathcal B_1^{(2)} = \mathcal C_1^{(2)},
\end{IEEEeqnarray}
where
\begin{IEEEeqnarray}{l}
\mathcal A_1^{(1)}=\left[
\begin{array}{lll}
1 & 0
\end{array}
\right]\quad
\mathcal B_1^{(1)}=\left[
\begin{array}{ll}
0 \\ 1
\end{array} 
\right]\quad
\mathcal C_1^{(1)}=0 \IEEEnonumber\\
\mathcal A_1^{(2)}=\left[
\begin{array}{lll}
0 & 1
\end{array}
\right]\quad
\mathcal B_1^{(2)}=\left[
\begin{array}{ll}
1 \\ 0
\end{array}
\right]\quad
\mathcal C_1^{(2)}=0.
\end{IEEEeqnarray}
By using (\ref{equation:transformation}), we have
\begin{IEEEeqnarray}{l}
\bar {\mathcal A}=\left[
\begin{array}{llll}
0 & 0 & 1 & 0\\
0 & 1 & 0 & 0
\end{array}
\right]\quad
\bar {\mathcal C}=\left[
\begin{array}{ll}
0 \\ 0
\end{array} 
\right].
\end{IEEEeqnarray}

In this example, the stopping criterion is chosen as $\varepsilon = 10^{-9}$ and the initial system state vector is chosen as a random vector with $\mathbb E\left(x_0x_0^T\right)=I$. Both the first-order optimization method and the second-order optimization method utilize the line search method. The parameters for the backtracking line search are chosen as $\alpha = 0.2$ and $\beta = 0.1$. The parameter for the PT-matrix, which will be used in the second-order optimization method, is chosen as $\epsilon = 10^{-6}$. The initial controller gain matrix is chosen as $K_0 = \operatorname{diag}\{-2, -3\}$.

\begin{figure}
	\centering
	\includegraphics[trim=100 50 120 50,width=1\columnwidth]{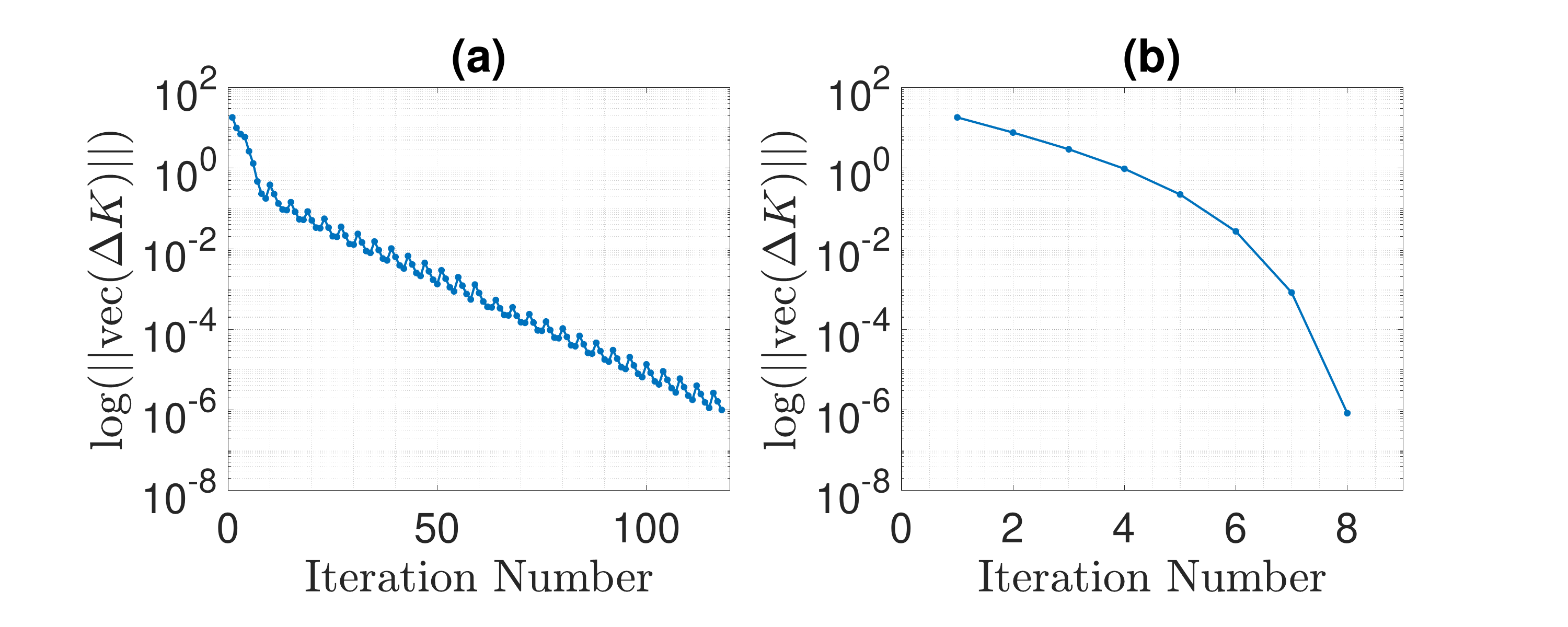}
	\caption{Norm of the gradient in Example 2. (a) First-order method. (b) Proposed method.}
	\label{fig:cost2}
\end{figure}

Fig.~\ref{fig:cost2}(a) shows the norm of the gradient of the cost function with respect to the controller gain matrix during iterations with the first-order optimization method. It can be seen that the first-order optimization method with the gradient projection method takes 118 iterations (totally 209 iterations with the backtracking line search iterations taken into consideration) to satisfy the stopping criterion. It takes 16.9911 seconds to reach the sub-optimal point. The cost function value with respect to the initial controller gain matrix is 22.2010, and after 118 iterations, the value of the cost function decreases to 12.8281. 

Fig.~\ref{fig:cost2}(b) shows the norm of the gradient of the cost function with respect to the controller gain matrix during iterations with the second-order optimization method. It shows that the second-order optimization method with the equality constrained Newton's method only needs 8 iterations (totally 8 iterations with the backtracking line search iterations taken into consideration) to satisfy the stopping criterion. It takes 1.5021 seconds to reach the sub-optimal point. The cost function value with respect to the initial controller gain matrix is 22.2010, and after 8 iterations, the value of the cost function decreases to 12.8281. Compared with the first-order method, the second-order method can achieve a much higher convergence rate. The sub-optimal parameter matrices given by both of the methods are the same with $K^*=\operatorname{diag}\{-1.3211, -6.0723\}$.


\section{Conclusion}\label{section:conclusion}
In this paper, a second-order non-convex optimization method is introduced and proposed
to solve the constrained fixed-structure SOF problem. 
Firstly, an efficient method in the matrix space is proposed to derive the Hessian matrix 
of the cost function with respect to the controller gain matrix. 
Secondly, the PT-inverse method is utilized to cater to the indefiniteness of the Hessian matrix. 
Thirdly, the equality constrained Newton's method is proposed to solve 
the controller optimization problem with the structural constraints. 
Finally, two illustrative examples are given to verify the 
applicability and effectiveness of the
proposed method. 
Comparisons between the first-order method and the second-order method proposed here
show the greatly improved performance of our proposed methodology and algorithm. 
With this proposed algorithm, the SOF LQR problems can certainly 
be solved with the requisite high accuracy and improved effectiveness.

\bibliographystyle{IEEEtran}
\bibliography{IEEEabrv,mybibfile}

\end{document}